\newtheorem{theorem}{Theorem}
\newtheorem{proposition}[theorem]{Proposition}
\newtheorem{lemma}[theorem]{Lemma}
\newtheorem{claim}[theorem]{Claim}
\theoremstyle{definition}
\newtheorem{definition}[theorem]{Definition}
\newtheorem{remark}[theorem]{Remark}
\theoremstyle{remark}
\newcommand{\cG}{\mathcal{G}}
\newcommand{\cX}{\mathcal{X}}
\newcommand{\bN}{\mathbb{N}}
\newcommand{\bR}{\mathbb{R}}
\newcommand{\poly}{\operatorname{poly}}
\newcommand{\nc}{\newcommand}
\nc{\on}{\operatorname}
\nc{\Spec}{\on{Spec}}
\nc{\Aut}{\textit{Aut}}
\nc{\id}{\textit{id}}
\nc{\chr}{\on{char}}
\nc{\im}{\on{im}}
\nc{\Hom}{\on{Hom}}
\nc{\lcm}{\on{lcm}}
\nc{\dual}[1]{\prescript{t}{}{#1}}
\nc{\transpose}[1]{{#1}^{\intercal}}
\nc{\Sym}{\on{Sym}}
\nc{\End}{\on{End}}
\nc{\stab}{\on{stab}}
\nc{\Li}{\on{Li}}
\nc{\spn}{\on{span}}
\nc{\sgn}{\on{sgn}}
\nc{\supp}{\on{supp}}
\nc{\Unif}{\on{Unif}}
\title{New Explicit Constant-Degree Lossless Expanders}
\author{Louis Golowich\thanks{UC Berkeley. Email: \texttt{lgolowich@berkeley.edu}. This work was supported by a National Science Foundation Graduate Research Fellowship under Grant No.~DGE 2146752, and supported in part by V.~Guruswami's Simons Investigator award and UC Berkeley Initiative for Computational Transformation award.}}
\begin{document}


\maketitle


\begin{abstract}
  We present a new explicit construction of onesided bipartite lossless expanders of constant degree, with arbitrary constant ratio between the sizes of the two vertex sets. Our construction is simpler to state and analyze than the only prior construction of Capalbo, Reingold, Vadhan, and Wigderson (2002), and achieves improvements in some parameters.

  We construct our lossless expanders by imposing the structure of a constant-sized lossless expander ``gadget'' within the neighborhoods of a large bipartite spectral expander; similar constructions were previously used to obtain the weaker notion of unique-neighbor expansion. Our analysis simply consists of elementary counting arguments and an application of the expander mixing lemma.
\end{abstract}




\pagenumbering{arabic}

\section{Introduction}
\label{sec:intro}
We construct infinite families of constant-degree onesided lossless bipartite expanders with arbitrary constant ratio between the sizes of the left and right vertex sets. A lossless expander is defined as a graph for which for all sufficiently small vertex sets, most of the outgoing edges lead to distinct vertices. These objects are applicable to various areas of computer science, including networks and distributed algorithms \cite{peleg_constructing_1989,arora_-line_1996,broder_static_1999,makur_broadcasting_2020}, compressed sensing \cite{xu_efficient_2007,jafarpour_efficient_2009,indyk_near-optimal_2008}, error-correcting codes \cite{sipser_expander_1996,lin_c3-locally_2022,lin_good_2022}, and proof complexity \cite{ben-sasson_short_2001,alekhnovich_pseudorandom_2004,alekhnovich_lower_2001}, among others.

While constant-degree random graphs give lossless expanders with high probability, the only previously known explicit construction\footnote{Shortly after our paper was posted online, a similar result obtained independently and concurrently was posted in \cite{cohen_hdx_2023}; see Remark~\ref{remark:concurrent}.} was obtained by Capalbo, Reingold, Vadhan, and Wigderson \cite{capalbo_randomness_2002}, using a fairly involved form of the zigzag product \cite{reingold_entropy_2002}. Therefore given the numerous applications of lossless expanders described above, it is desirable to have additional, simpler explicit constructions. A new such construction, which simplifies the construction of \cite{capalbo_randomness_2002}, is the main result of this paper.

Unlike lossless expanders, there are numerous known explicit constructions of spectral expanders (e.g.~\cite{margulis_explicit_1973,lubotzky_ramanujan_1988,morgenstern_existence_1994,reingold_entropy_2002,ben-aroya_combinatorial_2011,kaufman_construction_2018}), which are defined as graphs with no large nontrivial eigenvalues of the adjacency matrix. Yet Kahale~\cite{kahale_eigenvalues_1995} showed that even optimal spectral expanders can fail to exhibit lossless expansion. Hence constructions of lossless expanders must rely on different techniques.

We now formally define lossless expansion.

\begin{definition}
  For real numbers $0\leq\mu,\epsilon\leq 1$, a bipartite graph $G=(L(G)\sqcup R(G),E(G))$ with left-degree $d$ is a (onesided) \textbf{$(\mu,\epsilon)$-lossless expander} if for every $S\subseteq L(G)$ with $|S|\leq\mu|L(G)|$, it holds that $|N_G(S)|\geq(1-\epsilon)d|S|$.
\end{definition}

The main result of \cite{capalbo_randomness_2002}, which we recover with a simpler construction and analysis, is stated below.

\begin{theorem}
  \label{thm:expanders}
  For every open interval $\beta=(\beta^{(1)},\beta^{(2)})\subseteq\bR_+$ and every $\epsilon>0$, there exists a sufficiently large $D=D(\beta,\epsilon)\in\bN$ and a sufficiently small $\mu=\mu(\beta,\epsilon)>0$ such that there is an infinite explicit family of $(\mu,\epsilon)$-lossless expanders $G$ with left-degree $D$ and with $|R(G)|/|L(G)|\in(\beta^{(1)},\beta^{(2)})$.
\end{theorem}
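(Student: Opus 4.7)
The plan is to obtain $G$ by composing a large biregular bipartite spectral expander $H$ with a constant-sized bipartite lossless expander ``gadget'' $\Gamma$. Let $H$ be $(D_L, D_R)$-biregular with second singular value $\lambda$ (to be bounded), and let $\Gamma$ be a $(\mu_0, \epsilon_0)$-lossless expander with $|L(\Gamma)| = D_R$, $|R(\Gamma)| = D_R'$, and left-degree $d_L$. For each $v \in R(H)$, fix an arbitrary bijection $\sigma_v : N_H(v) \to L(\Gamma)$, and define $G$ by $L(G) := L(H)$, $R(G) := R(H) \times R(\Gamma)$, and $(u,(v,w)) \in E(G)$ iff $v \in N_H(u)$ and $w \in N_\Gamma(\sigma_v(u))$. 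Then $G$ has constant left-degree $D = D_L d_L$ and right-to-left size ratio $D_L D_R'/D_R$, which can be placed in $(\beta^{(1)}, \beta^{(2)})$ by choosing the three degrees appropriately.

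\textbf{Analysis.} Fix $S \subseteq L(G) = L(H)$ with $|S| \leq \mu|L(H)|$ and set $S_v := N_H(v) \cap S$, so that $\sum_{v} |S_v| = D_L|S|$. Since the gadget copies $\Gamma_v$ attached to distinct $v \in R(H)$ contribute disjoint subsets of $R(G)$, the deficit decomposes as
\[
D|S| - |N_G(S)| \;=\; \sum_{v \in R(H)} \bigl( d_L |S_v| - |N_{\Gamma_v}(S_v)| \bigr).
\]
I split $R(H) = R_{\text{light}} \sqcup R_{\text{heavy}}$ according to $|S_v| \leq \mu_0 D_R$ versus $|S_v| > \mu_0 D_R$. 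For light $v$, lossless expansion of $\Gamma$ gives per-vertex deficit at most $\epsilon_0 d_L |S_v|$, which sums to at most $\epsilon_0 d_L D_L |S|$. For heavy $v$, I use the trivial bound $d_L |S_v|$ on the deficit and control $e_H(S, R_{\text{heavy}}) = \sum_{v \in R_{\text{heavy}}} |S_v|$ by the expander mixing lemma on $H$: choosing $\mu \leq \mu_0/2$ makes the EML ``expected'' term $D_L |S| |R_{\text{heavy}}|/|R(H)|$ at most half of the heavy lower bound $\mu_0 D_R |R_{\text{heavy}}|$, forcing the $\lambda \sqrt{|S| |R_{\text{heavy}}|}$ error term to dominate; a short rearrangement yields $e_H(S, R_{\text{heavy}}) \leq 4\lambda^2 |S|/(\mu_0 D_R)$. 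Setting $\epsilon_0 := \epsilon/2$ and choosing $\lambda$ small enough that $4\lambda^2/(\mu_0 D_R) \leq (\epsilon/2) D_L$ bounds the total deficit by $\epsilon D|S|$, as required.

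\textbf{Parameter choices and main obstacle.} I first fix $\mu_0 > 0$ and $\epsilon_0 := \epsilon/2$ and locate a constant-sized gadget $\Gamma$ with the required parameters by exhaustive search (existence follows from the standard probabilistic argument for suitable $D_R, D_R', d_L$). Then I pick integers $D_L, D_R$ large enough that an explicit $(D_L, D_R)$-biregular bipartite spectral expander $H$ exists with $\lambda^2 \leq \epsilon \mu_0 D_L D_R / 8$; bipartite Ramanujan graphs from \cite{morgenstern_existence_1994} achieve $\lambda \leq \sqrt{D_L - 1} + \sqrt{D_R - 1}$, which comfortably satisfies this bound once $\min(D_L, D_R) \geq \Omega(1/(\epsilon\mu_0))$. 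I simultaneously tune $D_L/D_R$ and $D_R'$ so that $D_L D_R'/D_R \in (\beta^{(1)}, \beta^{(2)})$, and finally set $\mu := \mu_0/2$ and $D := D_L d_L$. The main obstacle is just the combined parameter juggling: every degree must simultaneously meet the gadget-existence constraints, the EML threshold on $\lambda$, and the target size-ratio constraint, but this is routine bookkeeping once the explicit bipartite expander constructions are in hand.
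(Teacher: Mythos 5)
Your construction matches the paper's framework (an outer biregular spectral expander with lossless gadgets glued into the right-vertex neighborhoods), but the analysis has a gap in the parameter bookkeeping that is not ``routine'': the bipartite expander mixing lemma applied directly to $H$ is fundamentally too weak to beat your trivial per-heavy-vertex deficit bound. You need $e_H(S,R_{\mathrm{heavy}})\le(\epsilon/2)D_L|S|$, and your rearrangement gives $e_H(S,R_{\mathrm{heavy}})\le 4\lambda^2|S|/(\mu_0 D_R)$, which forces the \emph{normalized} second singular value to satisfy $\lambda^2/(D_L D_R)\le \epsilon\mu_0/8$. For any $(D_L,D_R)$-biregular bipartite graph the normalized second singular value squared is at least about $1/\min(D_L,D_R)$ (Alon--Boppana), so this requires $\min(D_L,D_R)\gtrsim 1/(\epsilon\mu_0)$. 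But any $(\mu_0,\epsilon_0)$-lossless gadget with left-degree $d_L$ and ratio $|R(\Gamma)|/|L(\Gamma)|=\beta_0$ satisfies $\mu_0\le\beta_0/\bigl((1-\epsilon_0)d_L\bigr)$ by a trivial counting argument, and your target ratio $D_L\beta_0\approx\beta$ forces $\beta_0\approx\beta/D_L$, hence $\mu_0=O\bigl(\beta/(D_L d_L)\bigr)$. Plugging in, the requirement becomes $D_L\gtrsim D_L d_L/(\epsilon\beta)$, which is false for every $\epsilon<1$: increasing $D_L$ to satisfy the spectral constraint shrinks $\mu_0$ proportionally, so the constraint can never be met.

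The paper breaks this circularity with a structurally different analysis. It fixes the left degree of the outer expander at $k=\Theta(1/\epsilon)$, \emph{independent} of $\mu_0$, and rather than bounding the total edge mass into heavy vertices it proves (Claim~\ref{claim:Sleq1}) that all but a $1/(5k)$-fraction of $w\in S$ have at most one heavy $X$-neighbor; such $w$ still route $(k-1)/k$ of their $D=kd_0$ edges through ``good'' gadgets, and the $(1-1/(5k))(k-1)/k$ factor is absorbed by the choice of $k$. The doubly-heavy set $S_{\ge 2}$ is then bounded by applying the mixing lemma not to the bipartite graph $H$ but to the \emph{nonlazy square} $X'$ on $R(X)$, whose normalized second eigenvalue $\lambda_2(X')$ tends to $0$ as the right degree $D_0\to\infty$ with $k$ held fixed --- this is the spectral quantity one can push below the $\mu_0$-dependent threshold without circularity, unlike the bipartite singular value. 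To salvage your cleaner deficit-decomposition framing you would need to replace the trivial heavy bound by an argument charging only left-vertices with $\ge 2$ heavy neighbors, and run the mixing lemma on $X'$ rather than $H$.
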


For fixed $\beta,\epsilon$, our construction achieves a smaller degree $D$ than \cite{capalbo_randomness_2002}, but \cite{capalbo_randomness_2002} achieves a larger expansion cutoff $\mu$ (see Remark~\ref{remark:compareparam}). Thus the two constructions have incomparable parameters. For instance, for a fixed constant $\beta$, we obtain $D=\tilde{O}(1/\epsilon^2)$ and $\mu=\tilde{\Omega}(\epsilon^{10})$, whereas \cite{capalbo_randomness_2002} obtains $D=\tilde{O}(1/\epsilon^3)$ and $\mu=\tilde{\Omega}(\epsilon^4)$. Neither construction achieves the optimal dependencies $D=\tilde{O}(1/\epsilon)$ and $\mu=\tilde{\Omega}(\epsilon^2)$ (see Proposition~\ref{prop:randloss} below).

Reducing the degree $D$ as a function of $\epsilon$ is useful in recent applications of lossless expanders \cite{guruswami_lp-spread_2022,doron_almost_2023}. For instance, by Corollary~10 of \cite{guruswami_lp-spread_2022}, our improved dependence $D=\tilde{O}(1/\epsilon^2)$ immediately yields explicit constructions of matrices with the $\ell_p$-restricted isometry property for new values of $p$; such matrices have applications to compressed sensing (see e.g.~\cite{allen-zhu_restricted_2015}).

We remark that while Theorem~\ref{thm:expanders} provides explicit onesided lossless expansion, there remains no known explicit construction of twosided lossless expanders, which require lossless expansion from right to left as well as from left to right.

Theorem~\ref{thm:expanders} provides constant-degree lossless expanders with any constant ratio $|R(G)|/|L(G)|=\Theta(1)$ between the sizes of the left and right vertex sets. Highly unbalanced lossless expanders, where $|R(G)|/|L(G)|$ decays polynomially and the left-degree grows poly-logarithmically with the number of vertices, were constructed by \cite{ta-shma_lossless_2007,guruswami_unbalanced_2009,kalev_unbalanced_2022} to obtain randomness extractors.

Our construction follows the same framework as the unique-neighbor expander constructions of \cite{alon_explicit_2002,becker_symmetric_2016,asherov_bipartite_2023,hsieh_explicit_2023} in that we begin with a good spectral expander and then impose the structure of a smaller ``gadget graph'' locally in the neighborhoods of vertices in the spectral expander. In fact, our construction is essentially the same as the onesided unique neighbor expanders of \cite{asherov_bipartite_2023}, though we provide a new analysis in order to obtain the stronger object of onesided lossless expanders. Note that unlike lossless expanders, which require most vertices in the neighborhood of every sufficiently small set $S$ to be connected to $S$ by a unique edge, unique-neighbor expanders only require the existence of a single such vertex\footnote{Some definitions of unique-neighbor expansion make the stronger requirement that at least a small constant fraction of the neighbors of $S$ are connected to $S$ by a single edge. However, this requirement is still weaker than lossless expansion.} in the neighborhood of $S$. Furthermore, whereas the analysis of \cite{asherov_bipartite_2023} requires the construction to be instantiated with an unbalanced bipartite Ramanujan graph with the optimal twosided expansion (of which few constructions are currently known), our analysis is more robust in that near-Ramanujan onesided expansion suffices (see Remark~\ref{remark:robustexp} for details).

We construct our lossless expanders $G$ by combining a large unbalanced bipartite spectral expander $X$ with a constant-sized lossless expander (a ``gadget'') $G_0$ as follows. We let $L(G)=L(X)$ and $R(G)=R(X)\times R(G_0)$, and then let $G$ be the union of $|R(X)|$ copies of the gadget $G_0$. Specifically, we add to $G$ a copy of $G_0$ for each $v\in R(X)$ by associating neighbors of $v$ with left-vertices of $G_0$, and elements of $\{v\}\times R(G_0)$ with right-vertices of $G_0$. Note that this construction requires the right-degree of $X$ to equal $|L(G_0)|$.

Whereas our construction combines a large unbalanced spectral expander with a small lossless expander, the construction of \cite{capalbo_randomness_2002} combines a large balanced spectral expander with two small gadgets, namely one lossless expander and one more sophisticed object called a ``buffer conductor.'' The unbalanced spectral expander in our construction essentially serves the same purpose as the combination of the balanced spectral expander and buffer conductor in \cite{capalbo_randomness_2002}.

We remark that both our construction and that of \cite{capalbo_randomness_2002} yield graphs permitting a free group action on the vertices and edges, with group size linear in the number of vertices. Onesided lossless expanders permitting such group actions in turn give asymptotically good locally testable codes by \cite{lin_c3-locally_2022}. Thus our construction implies a new family of good locally testable codes.

Similarly, \cite{lin_good_2022} show that \textit{twosided} lossless expanders permitting a group action imply asymptotically good quantum LDPC codes with linear-time decoders. While no such expanders are currently known, it is an interesting question whether our techniques could be extended to obtain twosided expanders that can instantiate these codes. However, we note that there are other unconditional constructions of good quantum LDPC codes \cite{panteleev_asymptotically_2021,leverrier_quantum_2022-1,dinur_good_2023} with linear-time decoders \cite{leverrier_efficient_2023,gu_efficient_2023,dinur_good_2023}.


\begin{remark}
  \label{remark:concurrent}
  In independent and concurrent work, Cohen, Roth, and Ta-Shma \cite{cohen_hdx_2023} obtained a similar construction of lossless expanders. Specifically, \cite{cohen_hdx_2023} and our work both use the same framework described above of combining a large unbalanced bipartite spectral expander with a constant-sized lossless expander. However, \cite{cohen_hdx_2023} construct the large bipartite spectral expander using the hyper-regular high-dimensional expanders (HDXs) of \cite{friedgut_hyper-regular_2020}. In contrast, we show that it suffices to use HDXs with weaker regularity properties, or to simply use bipartite Ramanujan graphs.
\end{remark}

\section{Preliminaries}
This section introduces basic notions and known results.

For a graph $G=(V(G),E(G))$ and a set of vertices $S\subseteq V(G)$, we let $N_G(S)$ denote the set of neighbors of $S$ in $G$. For $v,v'\in V(G)$, we let $w_G(v,v')$ denote the weight of the edge from $v$ to $v'$ (which for a simple graph is always $0$ or $1$). Similarly, for $S,S'\subseteq V(G)$, we let $w_G(S,S')=\sum_{(v,v')\in S\times S'}w_G(v,v')$ denote the sum of the weights of edges from vertices in $S$ to vertices in $S'$. For a vertex $v\in V(G)$, the degree $\deg(v)=w_G(v,V(G))$ equals the sum of the weights of the edges incident to that vertex. For a bipartite graph $G$, we let $V(G)=L(G)\sqcup R(G)$ denote the decomposition into the left and right vertex sets.

The main focus of our paper is to construct lossless expanders satisfying the following standard notion of explicitness.

\begin{definition}
  \label{def:explicit}
  A family of graphs is \textbf{explicit} if there exists a $\poly(n)$-time algorithm that takes as input an integer $n$, and outputs an $n$-vertex graph in the family, if one exists. 
\end{definition}

Our analysis will rely heavily on the notion of spectral expansion, defined below.

\begin{definition}
  For an $n$-vertex graph $G$, the \textbf{(onesided) spectral expansion} $\lambda_2(G)$ is defined as the second largest eigenvalue of the random walk matrix of $G$. Formally, letting $W_G$ denote the random walk matrix, so that $(W_G)_{v,v'}=w_G(v,v')/\deg(v)$, if the eigenvalues of $W_G$ are $1=\lambda_1(W_G)\geq\lambda_2(W_G)\geq\cdots\geq\lambda_n(W_G)$, then $\lambda_2(G):=\lambda_2(W_G)$ is the onesided spectral expansion.
\end{definition}

We will make use of the following well known property of spectral expanders.

\begin{lemma}[Expander Mixing Lemma; see for instance Lemma~4.15 of \cite{vadhan_pseudorandomness_2012}] 
  \label{lem:expmix}
  For a $D$-regular graph $G$, it holds for every subset of vertices $S\subseteq V(G)$ that
  \begin{equation*}
    w_G(S,S) \leq \left(\lambda_2(G)+\frac{|S|}{|V(G)|}\right)D|S|.
  \end{equation*}
\end{lemma}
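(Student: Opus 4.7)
The plan is to express $w_G(S,S)$ as a quadratic form in the indicator vector $\1_S\in\bR^{V(G)}$, decompose $\1_S$ along the top eigenvector of the random walk matrix and its orthogonal complement, and apply the Rayleigh quotient bound on the complement.

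First I would observe that since $G$ is $D$-regular, the symmetric adjacency matrix $A_G$ satisfies $A_G=D\cdot W_G$. Since $w_G(v,v')=(A_G)_{v,v'}$, this gives
\begin{equation*}
  w_G(S,S)=\sum_{v,v'\in S}(A_G)_{v,v'}=\1_S^\intercal A_G\1_S=D\cdot\1_S^\intercal W_G\1_S.
\end{equation*}
Next I would decompose $\1_S=\alpha\1+u$, where $\1$ is the all-ones vector (an eigenvector of $W_G$ with eigenvalue $1$), $u$ is orthogonal to $\1$, and $\alpha=\langle\1_S,\1\rangle/\|\1\|^2=|S|/|V(G)|$. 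A direct calculation using $\|\1_S\|^2=|S|$ gives $\|u\|^2=|S|-|S|^2/|V(G)|\leq|S|$.

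Then I would expand the quadratic form. Using $W_G\1=\1$, the symmetry of $W_G$, and the orthogonality $\langle u,\1\rangle=0$, the cross terms vanish and
\begin{equation*}
  \1_S^\intercal W_G\1_S=\alpha^2\cdot\1^\intercal\1+u^\intercal W_G u=\frac{|S|^2}{|V(G)|}+u^\intercal W_G u.
\end{equation*}
Writing $u$ in an orthonormal eigenbasis of $W_G$ (which exists by symmetry) and noting that the $\1$-component has been removed, the remaining eigenvalues are at most $\lambda_2(G)$, so the Rayleigh quotient bound yields $u^\intercal W_G u\leq\lambda_2(G)\|u\|^2\leq\lambda_2(G)|S|$.

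Combining these three displays gives
\begin{equation*}
  w_G(S,S)\leq D\left(\frac{|S|^2}{|V(G)|}+\lambda_2(G)|S|\right)=\left(\lambda_2(G)+\frac{|S|}{|V(G)|}\right)D|S|,
\end{equation*}
which is the stated inequality. There is no genuine obstacle here: the proof is a standard spectral decomposition argument. The only substantive point to get right is that the Rayleigh quotient bound applies with $\lambda_2(G)$ (rather than with the absolute spectral gap) because after removing the $\1$-component every surviving eigenvalue is at most $\lambda_2(G)$, so the one-sided upper bound suffices even though $\lambda_n(G)$ may be negative.
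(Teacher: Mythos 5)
Your proof is correct and is the standard spectral-decomposition argument for the one-sided expander mixing lemma; the paper itself does not prove this lemma but simply cites Vadhan's lecture notes (Lemma~4.15), where essentially the same argument appears. You correctly identify the subtle point that after projecting out the $\1$-direction, the Rayleigh quotient is bounded above by $\lambda_2(G)$ without any absolute values, which is exactly why the one-sided form of the lemma holds with $\lambda_2$ rather than $\max(|\lambda_2|,|\lambda_n|)$.
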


We will make use of unbalanced bipartite graphs for which the ``nonlazy'' or ``nonbacktracking'' length-2 walk (that is, the square) has good spectral expansion.

\begin{definition}
  For a bipartite graph $G$, the \textbf{nonlazy square} $G'$ is the graph on vertex set $V(G')=R(G)$, with edge weights given for $v,v'\in R(G)$ by $w_{G'}(v,v')=0$ if $v=v'$ and $w_{G'}(v,v')=\sum_{w\in L(G)}w_G(v,w)w_G(w,v')$ if $v\neq v'$.
\end{definition}

\begin{proposition}
  \label{prop:nonlazyexp}
  For every integer $k\geq 2$ and for every $\lambda_2\geq 0$, it holds for infinitely many $D\in\bN$ that there exists an infinite explicit family of $(k,D)$-biregular bipartite graphs whose nonlazy square has (onesided) spectral expansion $\leq\lambda_2$.
\end{proposition}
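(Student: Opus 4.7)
The plan is to reduce the required bound on $\lambda_2(G')$ to a bipartite spectral bound on $G$ itself, and then invoke an explicit construction. Let $A \in \bR^{L(G) \times R(G)}$ be the $\{0,1\}$ bipartite adjacency matrix of a $(k,D)$-biregular $G$. Computing directly from the definition of the nonlazy square, its weighted adjacency matrix is exactly $A^{\top} A - D \cdot I_{R(G)}$, since the $(v,v')$-entry of $A^{\top} A$ counts common left-neighbors of $v$ and $v'$, which equals $D$ when $v = v'$ and $w_{G'}(v,v')$ otherwise. Because $G'$ is $D(k-1)$-regular, its random walk matrix is $W_{G'} = (A^{\top} A - DI)/(D(k-1))$, with eigenvalues $(\sigma^2 - D)/(D(k-1))$ as $\sigma$ ranges over the singular values of $A$. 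The trivial value $\sigma = \sqrt{kD}$ gives the top eigenvalue $1$, so it suffices to bound the second singular value of $A$.

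In the base case $k = 2$ the construction is especially clean: take $H$ to be any explicit $D$-regular Ramanujan graph \cite{lubotzky_ramanujan_1988,morgenstern_existence_1994} and let $G$ be the vertex-edge incidence graph of $H$, with $L(G) = E(H)$, $R(G) = V(H)$, and $(e,v) \in E(G)$ iff $v$ is an endpoint of $e$. A direct calculation gives $A^{\top} A = D \cdot I + A_H$, where $A_H$ is the adjacency matrix of $H$; thus $G' = H$ as a weighted graph and $\lambda_2(G') = \lambda_2(H) \leq 2\sqrt{D-1}/D$, which we can make $\leq \lambda_2$ by taking $D$ sufficiently large within the infinite Ramanujan family.

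For $k \geq 3$ we instead start from an explicit infinite family of $(k,D)$-biregular bipartite near-Ramanujan graphs, such as the derandomization of Marcus-Spielman-Srivastava due to Cohen, or algebraic biregular Ramanujan bigraph constructions. Any such family in which the nontrivial singular values of $A$ are bounded by $\sqrt{k-1} + \sqrt{D-1} + o_D(1)$ plugs into the formula above to give
\[
\lambda_2(G') \leq \frac{k - 2 + 2\sqrt{(k-1)(D-1)}}{D(k-1)} + o_D(1),
\]
which tends to $0$ as $D \to \infty$ with $k$ fixed, so choosing $D$ sufficiently large among the infinitely many admissible $D$ makes $\lambda_2(G') \leq \lambda_2$.

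The spectral translation $W_{G'} = (A^{\top} A - DI)/(D(k-1))$ is immediate, so the main work is merely in citing an explicit $(k,D)$-biregular bipartite expander family for each fixed $k \geq 3$ on infinitely many $D$. Since the proposition only asks for $\lambda_2(G') \leq \lambda_2$ at some fixed (possibly large) threshold rather than Ramanujan-quality expansion, any explicit family of $(k,D)$-biregular bipartite graphs with nontrivial singular values $\sigma = o(\sqrt{D})$ already suffices, so this obstacle is quite mild.
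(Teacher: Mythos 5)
Your proof is essentially the paper's second proof of this proposition, phrased in terms of singular values of the biadjacency matrix $A$ rather than eigenvalues of the full unnormalized adjacency matrix $M_G$ and its square. The key spectral identity $W_{G'} = (A^\top A - D I)/(D(k-1))$ is exactly the paper's calculation $\lambda_2(M_{G'}) = \lambda_2(M_{G^2}|_{R(G)}) - D$ followed by normalization, and plugging in the near-Ramanujan singular value bound $\sigma_2 \le \sqrt{D-1}+\sqrt{k-1}+o_D(1)$ yields the same final estimate $\lambda_2(G') \lesssim 2/\sqrt{D(k-1)}$. The paper cites \cite{gribinski_existence_2021} as the explicit source of $(k,D)$-biregular bipartite Ramanujan graphs; you should do likewise (or name another concrete source) rather than gesturing at Cohen's derandomization, which as usually stated gives $d$-regular bipartite, not biregular, Ramanujan graphs. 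Your $k=2$ special case via vertex-edge incidence graphs of ordinary Ramanujan graphs is a correct and pleasant shortcut not in the paper, though the paper's argument already covers $k=2$. (The paper also gives a first, independent proof via $(k-1)$-dimensional simplicial complexes whose $1$-skeleton is the nonlazy square; this alternative has the extra benefit of admitting a free group action, which the paper later exploits. You do not address this route, which is fine since one proof suffices.)

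One small inaccuracy in your closing remark: you claim that any family with nontrivial singular values $\sigma = o(\sqrt{D})$ suffices, and that this requirement is therefore ``quite mild.'' But by the Alon--Boppana bound for biregular bipartite graphs one has $\sigma_2 \ge \sqrt{D-1}-\sqrt{k-1}-o(1) = \Theta(\sqrt{D})$, so the condition $\sigma_2 = o(\sqrt{D})$ is never met; the statement is vacuously true and therefore not a meaningful relaxation. What your own calculation actually shows is that the correct, attainable sufficient condition is $\sigma_2^2 = D(1+o(1))$, i.e.\ $\sigma_2 = \sqrt{D}(1+o(1))$, which near-Ramanujan families satisfy. This matches the robustness observation the paper makes in Remark~\ref{remark:robustexp}, namely that onesided near-Ramanujan expansion with absolute eigenvalue bound $\sqrt{D-1}+\sqrt{k-1}+\alpha$ for small $\alpha$ already works.
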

\begin{proof}
  We describe two different known constructions that each prove the proposition:
  \begin{enumerate}
  \item\label{it:hdxproof} If $X$ is a $(k-1)$-dimensional simplicial complex for $k\in\bN$ and $G$ is the incidence graph between $(k-1)$-dimensional faces $X(k-1)=L(G)$ and vertices $X(0)=R(G)$, then the nonlazy square $G'$ of $G$ is the 1-skeleton of $X$. For any fixed $k\in\bN$ and $\lambda_2>0$, Ramanujan complexes \cite{lubotzky_ramanujan_2005,lubotzky_explicit_2005} as well as the coset complexes of \cite{kaufman_construction_2018,odonnell_high-dimensional_2022-1} provide examples of explicit such $(k-1)$-dimensional simplicial complexes $X$ with constant degree and arbitrarily good spectral expansion $\lambda_2(G')\leq\lambda_2$.
  \item\label{it:bipramproof} Let $G$ be a $(k,D)$-biregular bipartite Ramanujan graph, for instance as constructed explicitly in \cite{gribinski_existence_2021}, so that every nontrivial eigenvalue of the unnormalized adjacency matrix $M_G$ of $G$ is at most $\lambda_2(M_G)\leq\sqrt{D-1}+\sqrt{k-1}$. We emphasize that here $(M_G)_{v,v'}=w_G(v,v')$ is unnormalized, so $\lambda_2(M_G)\neq\lambda_2(G)=\lambda_2(W_G)$. Then every nontrivial eigenvalue of the unnormalized adjacency matrix $M_{G^2}$ of the (ordinary) square $G^2$ is at most $\lambda_2(M_G)^2\leq(\sqrt{D-1}+\sqrt{k-1})^2=(D-1)+(k-1)+2\sqrt{(D-1)(k-1)}$. Here we have used the fact that the spectrum of $M_G$ is symmetric about $0$ as $G$ is bipartite. Furthermore, $M_{G^2}$ is block diagonal with blocks $L(G)\times L(G)$ and $R(G)\times R(G)$ both having spectrum equal to the square of the spectrum of $M_G$, up to zero-eigenvalues. Therefore $\lambda_2(M_{G^2}|_{R(G)})=\lambda_2(M_G)^2$. Thus every nontrivial eigenvalue of the unnormalized adjacency matrix $M_{G'}$ of the nonlazy square $G'$ is at most $\lambda_2(M_{G'})=\lambda_2(M_{G^2}|_{R(G)})-D\leq(k-1)+2\sqrt{(D-1)(k-1)}$. As $G'$ is $D(k-1)$-regular, it holds that $W_{G'}=M_{G'}/D(k-1)$, so $\lambda_2(G')=\lambda_2(W_{G'})\leq 1/D+2/\sqrt{D(k-1)}$. Thus $\lambda_2(G')\leq\lambda_2$ if $D$ is sufficiently large.
  \end{enumerate}
\end{proof}

\begin{remark}
  As Ramanujan complexes are Cayley complexes, the construction in the \ref{it:hdxproof}st proof of Proposition~\ref{prop:nonlazyexp} has the added benefit of permitting a free group action on the vertices and faces that acts transitively on the vertices. Our entire construction can be made to respect this group action, and the orbits have linear size with respect to the number of vertices. Hence our construction can be used to instantiate the asymptotically good locally testable codes of \cite{lin_c3-locally_2022}. 
\end{remark}

\begin{remark}
  We prove Proposition~\ref{prop:nonlazyexp} for the ordinary notion of (weak) explicitness given in Definition~\ref{def:explicit}. However, we could instead consider \textit{strong explicitness}, which requires the family of graphs to have a $\poly(\log n)$-time algorithm that takes as input integers $n,i,j$, and outputs the $j$th neighbor of the $i$th vertex of the $n$-vertex graph in the family, if one exists. Some of our constructions proving Proposition~\ref{prop:nonlazyexp}, such as the construction using the high-dimensional expanders of \cite{kaufman_construction_2018,odonnell_high-dimensional_2022-1}, satisfy this notion of strong explicitness (see \cite{odonnell_high-dimensional_2022-1} for a proof). With such an instantiation, our entire construction of lossless expanders becomes strongly explicit. However, for simplicity in this paper we primarily discuss ordinary (weak) explicitness.
\end{remark}

\begin{remark}
  \label{remark:robustexp}
  The \ref{it:bipramproof}nd proof of Proposition~\ref{prop:nonlazyexp} is robust in the sense that it will still work for onesided near-Ramanujan bipartite graphs, that is, for $(k,D)$-biregular bipartite graphs for which the nontrivial eigenvalues of the adjacency matrix have absolute value $\leq\sqrt{D-1}+\sqrt{k-1}+\alpha$, as long as $\alpha<o(\sqrt{D})$. It for instance follows that this argument works for random bipartite graphs, which achieve $\alpha=o(1)$ \cite{brito_spectral_2022}. In constrast, the analysis of unique-neighbor expansion in \cite{asherov_bipartite_2023} requires exactly Ramanujan bipartite graphs with twosided expansion, meaning they require all nontrivial eigenvalues to have absolute value lying in the interval $[\sqrt{D-1}-\sqrt{k-1},\sqrt{D-1}+\sqrt{k-1}]$.
\end{remark}



We will make use of the following bound showing lossless expansion of random bipartite graphs.

\begin{proposition}[Well known; see for instance Theorem 11.2.8 of \cite{guruswami_essential_2022}]
  \label{prop:randloss}
  For all constants $\beta,\epsilon>0$, there exists an integer $d=d(\beta,\epsilon)=\Theta(\log(1/\epsilon\beta)/\epsilon)$, a sufficiently large integer $n_0=n_0(\beta,\epsilon)$, and a sufficiently small real number $\mu=\mu(\beta,\epsilon)=\Theta(\epsilon\beta/d)$ such that for all $n\geq n_0$, there exists a bipartite graph $G$ with left-degree $d$ and with $|L(G)|=n$, $|R(G)|=\lfloor\beta n\rfloor$ such that $G$ is a $(\mu,\epsilon)$-lossless expander.
\end{proposition}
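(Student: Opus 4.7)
\emph{Plan.} The plan is to apply the probabilistic method. Consider the random bipartite graph $G$ with $|L(G)|=n$ and $|R(G)|=\lfloor\beta n\rfloor$ in which each left vertex independently chooses its $d$ right-neighbors uniformly at random (a simple graph may be extracted from a typical sample at negligible cost, or alternatively one works in the configuration model directly). It suffices to show that for the claimed $d$ and $\mu$, the probability that $G$ fails to be a $(\mu,\epsilon)$-lossless expander is strictly less than $1$.

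The key step is a two-layer union bound. For each $s\in\{1,\ldots,\lfloor\mu n\rfloor\}$, let $\mathcal{B}_s$ denote the event that some $S\subseteq L(G)$ of size $s$ has $|N_G(S)|<(1-\epsilon)ds$. If $\mathcal{B}_s$ holds for some such $S$, then there is a set $T\subseteq R(G)$ of size $\lfloor(1-\epsilon)ds\rfloor$ containing $N_G(S)$, and all $ds$ edges leaving $S$ must land inside $T$; this latter event has probability at most $((1-\epsilon)ds/\beta n)^{ds}$. Combining this with $\binom{a}{b}\leq(ea/b)^b$ applied to the number of choices of $S$ and of $T$, and writing $\sigma=s/n$, a standard simplification yields
\[
  \Pr[\mathcal{B}_s]\;\leq\;\left[\frac{e}{\sigma}\cdot e^{(1-\epsilon)d}\cdot\left(\frac{(1-\epsilon)d\sigma}{\beta}\right)^{\epsilon d}\right]^{s}.
\]
I would then choose $d$ and $\mu$ so that the bracketed quantity is at most $1/2$ for every $\sigma\in[1/n,\mu]$; summing the resulting geometric series over $s$ gives $\Pr[\bigcup_s\mathcal{B}_s]<1$ and completes the existence argument.

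The main obstacle is the parameter tuning in this last step. Since the derivative of the logarithm of the bracketed quantity in $\sigma$ has sign $\epsilon d-1$, for $\epsilon d>1$ the bracket is monotone increasing in $\sigma$, so it suffices to verify the bound at $\sigma=\mu$. This reduces the task to a single inequality of the form
\[
  \epsilon d\log\frac{\beta}{(1-\epsilon)d\mu}\;\gtrsim\;(1-\epsilon)d+\log\frac{1}{\mu}+O(1).
\]
Setting $d$ of order $\log(1/(\epsilon\beta))/\epsilon$ makes $\epsilon d$ a sufficiently large logarithm, and setting $\mu$ of order $\epsilon\beta/d$ keeps $(1-\epsilon)d\mu/\beta$ bounded by a small absolute constant; together these force the left-hand side to dominate the right. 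I expect the conceptual structure to be straightforward, and the main subtlety to lie in carefully tracking the constants so that the displayed bracketed bound genuinely falls below $1/2$ with parameters of the stated orders.
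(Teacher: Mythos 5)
The paper does not prove Proposition~\ref{prop:randloss}; it cites Theorem~11.2.8 of \cite{guruswami_essential_2022}. So I will evaluate your argument on its own merits against the standard proof.

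Your overall plan (probabilistic method, two-layer union bound over $s$ and over bad sets) is the right one, but the specific union bound you use contains a gap that prevents it from reaching the claimed parameters. When you bound the probability that some fixed $S$ of size $s$ has $|N(S)| < (1-\epsilon)ds$ by union-bounding over all $T\subseteq R(G)$ of size $\lfloor(1-\epsilon)ds\rfloor$ that could contain $N(S)$, the binomial coefficient $\binom{\beta n}{(1-\epsilon)ds}$ is genuinely of order $e^{(1-\epsilon)ds}\bigl((1-\epsilon)ds/\beta n\bigr)^{-(1-\epsilon)ds}$; this is where the $e^{(1-\epsilon)d}$ in your bracketed quantity comes from, and it is not an artifact of a loose estimate. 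After substituting $\sigma=\mu$, your target inequality reduces (for small $\epsilon$, ignoring lower-order terms) to
\begin{equation*}
  \epsilon d\,\log\frac{\beta}{(1-\epsilon)d\mu}\;\gtrsim\;(1-\epsilon)d,
  \qquad\text{i.e.}\qquad
  \epsilon\,\log\frac{\beta}{(1-\epsilon)d\mu}\;\gtrsim\;1.
\end{equation*}
With $\mu=\Theta(\epsilon\beta/d)$ we get $\beta/((1-\epsilon)d\mu)=\Theta(1/\epsilon)$, so the left side is $\Theta(\epsilon\log(1/\epsilon))\to 0$ as $\epsilon\to 0$, while the right side is $\Theta(1)$. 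This inequality fails for all sufficiently small $\epsilon$, no matter how large you take $d$, because it is homogeneous in $d$. Your parenthetical remark that choosing $\mu=\Theta(\epsilon\beta/d)$ ``keeps $(1-\epsilon)d\mu/\beta$ bounded by a small absolute constant'' is correct, but that only gives $\log(\beta/((1-\epsilon)d\mu))=\Theta(\log(1/\epsilon))$, which is not enough to beat $1/\epsilon$. The only way to rescue your union bound is to take $\mu$ exponentially small in $1/\epsilon$, namely $\mu=O\bigl(\beta\,e^{-1/\epsilon}/d\bigr)$, which is far smaller than the stated $\Theta(\epsilon\beta/d)$.

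The standard fix, and the argument used in the cited reference, replaces the union bound over target sets $T$ by a union bound over \emph{which edges collide}. Fix an ordering of the $ds$ edges emanating from $S$ and call an edge a repeat if it lands on a right vertex already hit by an earlier edge. If $|N(S)|<(1-\epsilon)ds$ then at least $\epsilon ds$ edges are repeats; for any fixed choice of which $\epsilon ds$ positions are repeats, each repeat has probability at most $ds/(\beta n)$ conditional on the earlier edges. This gives
\begin{equation*}
  \Pr[\mathcal{B}_s]\;\leq\;\binom{n}{s}\binom{ds}{\epsilon ds}\left(\frac{ds}{\beta n}\right)^{\epsilon ds}
  \;\leq\;\left[\frac{e}{\sigma}\left(\frac{e\,d\,\sigma}{\epsilon\beta}\right)^{\epsilon d}\right]^{s},
\end{equation*}
where the crucial improvement is that $\binom{ds}{\epsilon ds}\leq(e/\epsilon)^{\epsilon ds}$ costs a factor exponential in $\epsilon ds$ rather than in $(1-\epsilon)ds$. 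The same monotonicity argument in $\sigma$ that you correctly identified then reduces the task to making $\frac{e}{\mu}\bigl(\frac{ed\mu}{\epsilon\beta}\bigr)^{\epsilon d}\leq\frac{1}{2}$, and taking $\mu=\epsilon\beta/(e^2 d)$ reduces this to $\epsilon d\gtrsim\log(d/(\epsilon\beta))$, which indeed holds with $d=\Theta(\log(1/(\epsilon\beta))/\epsilon)$. So the conceptual skeleton of your argument is right, but you must change the inner union bound from target-set enumeration to collision enumeration to obtain the stated $\mu$.
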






\section{Construction}
\label{sec:construct}
In this section, we present our construction that we use to prove Theorem~\ref{thm:expanders}. We will subsequently prove that this construction has lossless expansion in Section~\ref{sec:losslessproof}. Our construction uses essentially the same framework as the unique neighbor expanders of \cite{asherov_bipartite_2023}, though we analyze it differently to obtain the stronger object of lossless expanders.

\begin{figure}
  \centering
  \begin{tikzpicture}[thick,
    every node/.style={draw,circle},
    fsnode/.style={fill=black},
    ssnode/.style={fill=black},
    every fit/.style={ellipse,draw},
    ]

    \begin{scope}[start chain=going below,node distance=.3cm]
      \foreach \i in {1,2,...,12}
      \node[fsnode,on chain] (f\i) {};
    \end{scope}

    \begin{scope}[xshift=4cm,yshift=-.5cm,start chain=going below,node distance=.3cm]
      \foreach \i in {1,...,3}
      \node[ssnode,on chain] (s\i) {};
    \end{scope}
    \begin{scope}[xshift=4cm,yshift=-3cm,start chain=going below,node distance=.3cm]
      \foreach \i in {4,...,6}
      \node[ssnode,on chain] (s\i) {};
    \end{scope}
    \begin{scope}[xshift=4cm,yshift=-5.5cm,start chain=going below,node distance=.3cm]
      \foreach \i in {7,...,9}
      \node[ssnode,on chain] (s\i) {};
    \end{scope}

    \node [black,fit=(f3) (f8),inner sep=-14pt,text width=2cm,label=left:$N_X(v)$] {};
    \node [black,fit=(s4) (s6),inner sep=-5pt,text width=1cm,label=right:$\{v\}\times R(G_0)$] {};

    \draw (f3) -- (s5);
    \draw (f3) -- (s6);
    \draw (f4) -- (s4);
    \draw (f4) -- (s5);
    \draw (f5) -- (s4);
    \draw (f5) -- (s5);

    \draw (f6) -- (s4);
    \draw (f6) -- (s6);
    \draw (f7) -- (s5);
    \draw (f7) -- (s6);
    \draw (f8) -- (s4);
    \draw (f8) -- (s6);
    
    \node[fill=white,draw=none,rectangle,minimum width=1.3cm,minimum height=4cm] at (2, -3.5) {$G_0^v$};
  \end{tikzpicture}
  \caption{\label{fig:construction}An illustration of how we construct our lossless expanders $G=G(X,G_0)$ as the union over $v\in R(X)$ of the gadgets $G_0^v\cong G_0$. A single such gadget is shown above, connecting the left vertices $L(G_0^v)=N_X(v)\cong L(G_0)$ to the right vertices $R(G_0^v)=\{v\}\times R(G_0)\cong R(G_0)$.}
\end{figure}
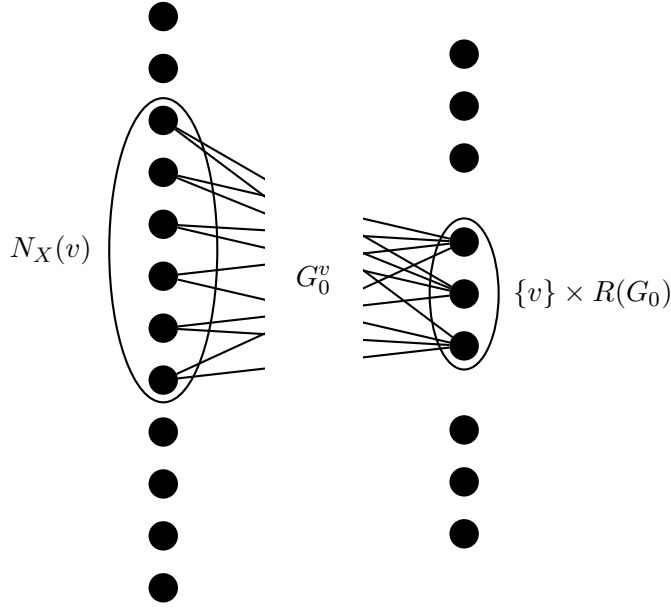

\subsection{General framework}
We first describe the general framework for constructing our lossless expanders $G$, and then present precise parameters. Throughout this section we fix a constant interval $\beta=(\beta^{(1)},\beta^{(2)})\subseteq\bR_+$ inside which we want the $|R(G)|/|L(G)|$ to lie, and we fix $\epsilon>0$ denoting the desired expansion.

To construct $G$, we begin by taking a $(k,D_0)$-biregular graph $X$ for which the nonlazy square $X'$ has good spectral expansion $\lambda_2(X')$. We choose $X$ using Proposition~\ref{prop:nonlazyexp}, so we think of $|V(X)|$ growing arbitrarily large for fixed $k\ll D_0$ and fixed $\lambda_2(X')=\poly(1/k,\beta^{(2)})$.

We also choose a constant-sized bipartite ``gadget'' graph $G_0$ with $|L(G_0)|=D_0$ and $|R(G_0)|=\lfloor D_0\beta^{(2)}/k\rfloor$ that is a lossless expander, as guaranteed to exist by Proposition~\ref{prop:randloss}.

We now define our desired lossless expander $G=G(X,G_0)$ as follows. An illustration is provided in Figure~\ref{fig:construction}.
\begin{itemize}
\item The left vertex set $L(G)=L(X)$ of $G$ equals the left vertex set of $X$.
\item The right vertex set $R(G)=R(X)\times R(G_0)$ of $G$ is obtained by replacing each right vertex of $X$ with a cluster of $|R(G_0)|$ vertices.
\item The edge set $E(G)$ is defined as follows. For each $v\in R(X)$, as $|N_X(v)|=D_0=|L(G_0)|$, we may associate the neighborhood $N_X(v)$ with $L(G_0)$. We may similarly associate $\{v\}\times R(G_0)\subseteq R(G)$ with $R(G_0)$. Therefore we may introduce a copy $G_0^v$ of $G_0$ with left vertex set $L(G_0^v):=N_X(v)\subseteq L(G)$ and right vertex set $R(G_0^v):=\{v\}\times R(G_0)\subseteq R(G)$. We then let $G$ be the union of the graphs $G_0^v$ over all $v\in R(X)$. That is, $(w,(v,v_0))\in E(G)$ if and only if $(w,(v,v_0))\in E(G_0^v)$.
\end{itemize}

The resulting graph $G=G(X,G_0):=\sum_{v\in L(X)}G_0^v$ satisfies the following basic properties.

\begin{claim}
  \label{claim:leftdeg}
  If $G_0$ has left-degree $d_0$, then $G$ has left-degree $D=d_0k$.
\end{claim}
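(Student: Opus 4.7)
The plan is to compute the left-degree of $G$ by fixing a left vertex $w\in L(G)=L(X)$ and counting its neighbors in $G$, using the decomposition $G=\bigcup_{v\in R(X)}G_0^v$ from the construction.

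First I would observe that by the definition of the gadgets, $w\in L(G_0^v)$ holds exactly when $w\in N_X(v)$, i.e.\ when $v\in N_X(w)$. Therefore the edges of $G$ incident to $w$ are precisely the edges of the gadgets $G_0^v$ as $v$ ranges over $N_X(w)$, and inside each such gadget $w$ plays the role of a left vertex of $G_0$ (after the identification $L(G_0^v)\cong L(G_0)$). Since $G_0$ has left-degree $d_0$, each gadget $G_0^v$ with $v\in N_X(w)$ contributes exactly $d_0$ edges at $w$.

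Next I would argue that no double counting occurs across different $v$. Because $R(G_0^v)=\{v\}\times R(G_0)$, the right vertex sets of distinct gadgets are disjoint subsets of $R(G)=R(X)\times R(G_0)$, so the edge sets of the gadgets incident to $w$ are pairwise disjoint. Combined with the fact that $X$ has left-degree $k$ (as $X$ is $(k,D_0)$-biregular with $|N_X(v)|=D_0=|L(G_0)|$ forcing $D_0$ to be the right-degree), we obtain
\[
\deg_G(w)\;=\;\sum_{v\in N_X(w)}\deg_{G_0^v}(w)\;=\;|N_X(w)|\cdot d_0\;=\;k d_0,
\]
which is independent of $w$, yielding the claimed left-degree $D=d_0 k$. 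There is no real obstacle here; the only bookkeeping point to be careful about is the disjointness of the gadget edge sets at $w$, which is immediate from the product structure of $R(G)$.
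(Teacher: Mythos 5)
Your proof is correct and follows essentially the same approach as the paper's: count, for a fixed $w\in L(X)$, the $d_0$ edges contributed by each of the $k$ gadgets $G_0^v$ with $v\in N_X(w)$. The only addition is your explicit note that the gadget edge sets at $w$ are pairwise disjoint (via the product structure of $R(G)$), which the paper leaves implicit.
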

\begin{proof}
  Each vertex $w\in L(G)=L(X)$ has $k$ $X$-neighbors $v\in N_X(w)\subseteq R(X)$, for each of which the graph $G_0^v$ contributes $d_0$ edges to $w$ in $G$.
\end{proof}

\begin{claim}
  \label{claim:balance}
  If $D_0\geq k/(\beta^{(2)}-\beta^{(1)})$, then $|R(G)|/|L(G)|\in(\beta^{(1)},\beta^{(2)})$.
\end{claim}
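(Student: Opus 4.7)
The plan is to prove the claim by a direct arithmetic calculation of the ratio $|R(G)|/|L(G)|$, and then verify the two one-sided bounds separately. No deep ingredient is needed, only the biregularity of $X$ and the definition of $|R(G_0)|$.

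First I would record that by construction $|L(G)|=|L(X)|$ and $|R(G)|=|R(X)|\cdot|R(G_0)|$. Since $X$ is $(k,D_0)$-biregular with left-degree $k$ and right-degree $D_0$, double-counting its edges gives $|L(X)|\,k=|R(X)|\,D_0$, and hence $|R(X)|/|L(X)|=k/D_0$. Substituting the prescribed $|R(G_0)|=\lfloor D_0\beta^{(2)}/k\rfloor$ yields the exact expression
\[
\frac{|R(G)|}{|L(G)|}\;=\;\frac{k}{D_0}\,\Bigl\lfloor\frac{D_0\beta^{(2)}}{k}\Bigr\rfloor.
\]

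The upper bound $|R(G)|/|L(G)|\leq\beta^{(2)}$ is then immediate from $\lfloor x\rfloor\leq x$. For the lower bound I would use the strict inequality $\lfloor x\rfloor > x-1$ to obtain
\[
\frac{|R(G)|}{|L(G)|}\;>\;\frac{k}{D_0}\Bigl(\frac{D_0\beta^{(2)}}{k}-1\Bigr)\;=\;\beta^{(2)}-\frac{k}{D_0}\;\geq\;\beta^{(2)}-(\beta^{(2)}-\beta^{(1)})\;=\;\beta^{(1)},
\]
where the final inequality invokes exactly the hypothesis $D_0\geq k/(\beta^{(2)}-\beta^{(1)})$.

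There is no substantive obstacle here; the argument is a three-line computation. The only mild subtlety is strictness at the right endpoint: the displayed upper bound is non-strict and becomes an equality precisely when $D_0\beta^{(2)}/k\in\bZ$. For the intended application one can either note that this condition is non-generic and can be avoided by perturbing $D_0$, or simply shrink $\beta^{(2)}$ by an arbitrarily small amount before entering the construction; this is a cosmetic adjustment and does not affect the rest of the analysis.
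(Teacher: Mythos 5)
Your proof is correct and follows essentially the same computation as the paper: express the ratio as $\tfrac{k}{D_0}\lfloor D_0\beta^{(2)}/k\rfloor$, bound the floor from above and below, and invoke the hypothesis on $D_0$. Your observation about potential non-strictness at the upper endpoint is a valid (if minor) point that the paper's own proof glosses over in the same way.
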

\begin{proof}
  By construction
  \begin{equation*}
    \frac{|R(G)|}{|L(G)|} = \frac{|R(X)|\cdot|R(G_0)|}{|L(X)|} = \frac{k}{D_0}\cdot\left\lfloor\frac{D_0}{k}\cdot\beta^{(2)}\right\rfloor,
  \end{equation*}
  which is at most $\beta^{(2)}$ and at least $k/D_0\cdot(D_0/k\cdot\beta^{(2)}-1)=\beta^{(2)}-k/D_0$. Thus the claim follows.
\end{proof}

\subsection{Choosing the parameters}
\label{sec:param}
Formally, our construction uses the following parameters and components, for fixed $\beta=(\beta^{(1)},\beta^{(2)})\subseteq\bR_+$ and $\epsilon>0$:
\begin{itemize}
\item Let $k=\lceil 10/\epsilon\rceil$.
\item For balance constant $\beta_0=\beta^{(2)}/k$ and loss constant $\epsilon_0=\epsilon/10$, let $d_0=d_0(\beta_0,\epsilon_0)=\Theta(\log(k/\beta^{(2)}\epsilon)/\epsilon)$, $n_0=n_0(\beta_0,\epsilon_0)$, and $\mu_0=\mu_0(\beta_0,\epsilon_0)=\Theta(\epsilon\beta^{(2)}/d_0k)$ be the degree, size bound, and relative set size bound respectively given by Proposition~\ref{prop:randloss}.
\item Let $\lambda_2=\mu_0/10k^3$.
\item Let $D_0\geq \max\{n_0,k/(\beta^{(2)}-\beta^{(1)})\}$ be an integer such that there exists an infinite explicit family $\cX$ of $(k,D_0)$-biregular bipartite graphs $X\in\cX$ for which the nonlazy square $X'$ has $\lambda_2(X')\leq\lambda_2$. Such a $D_0$ exists by Proposition~\ref{prop:nonlazyexp}.
\item Let $G_0$ be a bipartite graph with left-degree $d_0$ and with $|L(G_0)|=D_0$, $|R(G_0)|=\lfloor\beta_0D_0\rfloor$ that is a $(\mu_0,\epsilon_0)$-lossless expander, as given by Proposition~\ref{prop:randloss}.
\end{itemize}

Our desired family $\cG$ of graphs is then defined as $\cG=\{G(X,G_0):X\in\cX\}$, where $G=G(X,G_0)$ is constructed from $X$ and $G_0$ as described above.

The following explicitness claim is immediate from our construction, as $G=G(X,G_0)$ is by definition obtained from $X$ by inserting vertices and edges locally within the (constant-sized) neighborhoods of vertices in $X$.

\begin{claim}
  \label{claim:explicit}
  If the family $\cX$ is (strongly) explicit, then the family $\cG$ is (strongly) explicit.
\end{claim}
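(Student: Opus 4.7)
My plan is to directly verify that the construction $G=G(X,G_0)$ preserves explicitness, essentially by observing that all of the work beyond producing $X$ is purely local and of constant size. First I would note that the gadget $G_0$ has constant size (depending only on $\beta$ and $\epsilon$, not on $|V(X)|$), so it can be hard-coded as part of the algorithm. To produce $G$ from $X$, one iterates over the right vertices $v\in R(X)$, inserts the cluster $\{v\}\times R(G_0)$, and adds a copy $G_0^v$ of the gadget using the edges incident to $v$ in $X$ to index the left vertices of $G_0^v$. If $\cX$ is explicit, producing $X$ takes time $\poly(|V(X)|)$, and since each $G_0^v$ contributes only $O(1)$ additional vertices and edges, constructing $G$ takes time polynomial in $|V(G)|=\Theta(|V(X)|)$.

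For strong explicitness, I would show that the neighbors of a given vertex of $G$ are computable in time polylogarithmic in $|V(G)|$. For $w\in L(G)=L(X)$, the neighbors in $G$ are
\begin{equation*}
  N_G(w) \;=\; \bigcup_{v\in N_X(w)}\{v\}\times N_{G_0^v}(w),
\end{equation*}
and strong explicitness of $\cX$ returns $N_X(w)$ in polylog time. For each such $v$ the set $N_{G_0^v}(w)$ is determined by the position of $w$ within a canonical ordering of $N_X(v)$, which can in turn be computed in polylog time by enumerating $N_X(v)$ via the strong explicitness of $\cX$ and locating $w$. The right-vertex case, in which a vertex $(v,v_0)\in R(G)$ has neighbors determined by $N_X(v)$ and the edges of $G_0$ incident to $v_0$, is handled symmetrically.

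The one bookkeeping point I would make explicit is that a fixed convention must be chosen for the bijection $N_X(v)\leftrightarrow L(G_0)$ used to define each $G_0^v$; any convention derivable from the vertex labels of $X$ (for instance, sorting $N_X(v)$ lexicographically by label) works, and strong explicitness of $\cX$ ensures that this convention can be evaluated in polylog time on any query vertex. I do not anticipate a genuine obstacle here: because the gadget structure is local and of constant size, the claim reduces entirely to local constant-time lookups composed with the assumed (strong) explicitness of $\cX$.
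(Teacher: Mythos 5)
Your proposal is correct and matches the paper's (unstated) justification: the paper simply declares the claim immediate because the construction only inserts constant-size local structure into the neighborhoods of $X$, which is exactly the observation you flesh out. Your added bookkeeping point about fixing a canonical bijection $N_X(v)\leftrightarrow L(G_0)$ computable from local polylog-time queries is a reasonable detail that the paper leaves implicit.
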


Note that $\cX$ can be made strongly explicit by using a strongly explicit family of high-dimensional expanders (e.g.~\cite{kaufman_construction_2018}) as in the \ref{it:hdxproof}st proof of Proposition~\ref{prop:nonlazyexp}.


\section{Proof of lossless expansion}
\label{sec:losslessproof}
We now show that the graphs $G\in\cG$ defined in Section~\ref{sec:construct} have lossless expansion. Specifically, we show the following result, which when combined with Claim~\ref{claim:leftdeg}, Claim~\ref{claim:balance}, and Claim~\ref{claim:explicit} directly implies Theorem~\ref{thm:expanders}.

\begin{proposition}
  \label{prop:expformal}
  Defining all variables as in Section~\ref{sec:construct}, then for every $X\in\cX$, the bipartite graph $G=G(X,G_0)\in\cG$ is a $(\mu,\epsilon)$-lossless expander for $\mu=k^2\lambda_2^2$.
\end{proposition}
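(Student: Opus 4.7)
The plan is to split $R(X)$ into a ``heavy'' set $B := \{v \in R(X) : |S_v| > \mu_0 D_0\}$ and its complement $\bar B$, where $S_v := S \cap N_X(v)$, and to handle the two separately.  For $v \in \bar B$ the set $S_v$ fits within the lossless-expansion threshold of the gadget $G_0^v$, so $|N_{G_0^v}(S_v)| \geq (1-\epsilon_0) d_0 |S_v|$.  Because the right vertex sets $R(G_0^v) = \{v\}\times R(G_0)$ are pairwise disjoint in $G$, summing gives $|N_G(S)| \geq (1-\epsilon_0) d_0 (k|S| - E)$, where $E := e_X(S, B) = \sum_{v\in B}|S_v|$.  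The proposition therefore reduces to showing
\begin{equation*}
E \;\leq\; \frac{\epsilon - \epsilon_0}{1 - \epsilon_0}\, k|S|,
\end{equation*}
which, given $\epsilon_0 = \epsilon/10$ and $k = \lceil 10/\epsilon\rceil$, amounts essentially to $E \lesssim 9|S|$.

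To bound $E$ I would apply the expander mixing lemma to $X'$ on the set $B$ and match it against a Cauchy--Schwarz lower bound.  The key identity, obtained by exchanging the order of summation, is
\begin{equation*}
w_{X'}(B, B) \;=\; \sum_{w \in L(X)} b_w(b_w - 1), \qquad b_w := |N_X(w) \cap B|,
\end{equation*}
since both sides count ordered triples $(v, v', w)$ with $v \neq v'$ in $B$ and $w \in N_X(v) \cap N_X(v')$.  Restricting the outer sum to $w \in S$ only decreases it, and Cauchy--Schwarz yields $\sum_{w \in S} b_w^2 \geq (\sum_{w\in S} b_w)^2/|S| = E^2/|S|$, so $w_{X'}(B, B) \geq E^2/|S| - E$.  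In the other direction, the heavy-vertex condition forces $|B| \leq E/(\mu_0 D_0)$, and substituting this into the EML bound $w_{X'}(B, B) \leq (\lambda_2 + |B|/|R(X)|) D_0(k-1)|B|$, together with the biregularity identity $D_0 |R(X)| = k|L(X)|$, produces
\begin{equation*}
w_{X'}(B, B) \;\leq\; \frac{\lambda_2 k}{\mu_0}\, E \;+\; \frac{E^2}{\mu_0^2 |L(X)|}.
\end{equation*}

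Combining the two inequalities and dividing by $E$ (if $E = 0$ the claim is immediate) gives the linear-in-$E$ estimate $E/|S| \leq 1 + \lambda_2 k/\mu_0 + E/(\mu_0^2 |L(X)|)$.  The chosen parameters make both error terms negligible: $\lambda_2 k/\mu_0 = 1/(10 k^2)$ from $\lambda_2 = \mu_0/(10 k^3)$, and $|S|/(\mu_0^2 |L(X)|) \leq \mu/\mu_0^2 = 1/(100 k^4)$ from $\mu = k^2 \lambda_2^2$.  Rearranging for $E$ then gives, say, $E \leq 3|S|$, well below the target $\lesssim 9|S|$.

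The main obstacle is the asymmetry between the relevant set $S \subseteq L(X)$ and the spectral hypothesis, which is phrased in terms of $X'$, a graph on $R(X)$.  The trick of writing $w_{X'}(B, B)$ as $\sum_{w \in L(X)} b_w(b_w - 1)$ and then restricting (via Cauchy--Schwarz) to $w \in S$ is exactly what converts a spectral bound on $R(X)$ into a useful estimate on the number $E$ of $S$-to-$B$ edges in $X$.
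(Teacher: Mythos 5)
Your proof is correct, and it is a genuinely different (and arguably slightly slicker) argument than the paper's, even though both share the same skeleton: define the heavy set $B\subseteq R(X)$, apply the gadget's lossless expansion on light vertices, and control the heavy contribution via the expander mixing lemma on the nonlazy square $X'$. Where you diverge is in the combinatorial accounting. The paper classifies vertices $w\in S$ by the count $b_w=|N_X(w)\cap H|$, discards \emph{all} contributions from $S_{\geq 2}=\{w:b_w\geq 2\}$, and bounds $|S_{\geq 2}|$ by injecting each such $w$ into an edge of $X'$ with both endpoints in $H$; the target is the qualitative claim $|S_{\leq 1}|/|S|\geq 1-1/(5k)$. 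You instead control the exact edge count $E=e_X(S,B)=\sum_{w\in S}b_w$ directly, using the identity $w_{X'}(B,B)=\sum_w b_w(b_w-1)$ together with Cauchy--Schwarz to obtain the quadratic lower bound $w_{X'}(B,B)\geq E^2/|S|-E$. Matching against the EML upper bound then gives a self-improving inequality on $E/|S|$. This is tighter accounting (you subtract only the heavy-incident edges rather than dropping entire vertices), and the Cauchy--Schwarz step is a clean substitute for the paper's injection argument; both methods crucially exploit that if a vertex touches two heavy vertices it creates an $H$-to-$H$ path in $X'$.

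One small imprecision in your final step: you bound the error term $E/(\mu_0^2|L(X)|)$ by quoting $|S|/(\mu_0^2|L(X)|)\leq\mu/\mu_0^2=1/(100k^4)$, but that is the coefficient multiplying $E/|S|$, not a bound on the whole term; you either need the extra factor $k$ from $E\leq k|S|$ (giving $1/(100k^3)$) or should make the absorption step explicit, writing $x=E/|S|$ and solving $x\leq 1+1/(10k^2)+x/(100k^4)$. Either way the slack is vast and your stated conclusion $E\leq 3|S|$ (hence $E$ well below the target $\tfrac{\epsilon-\epsilon_0}{1-\epsilon_0}k|S|\geq 9|S|$) follows immediately, so this is purely a matter of presentation, not a gap.
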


\begin{proof}
  Fix any set $S\subseteq L(G)=L(X)$ of size $|S|\leq\mu|L(G)|$. Define the ``heavy vertices'' $H=\{v\in R(X):|N_X(v)\cap S|\geq\mu_0D_0\}\subseteq R(X)$ to be those vertices in $R(X)$ incident to $\geq\mu_0D_0$ vertices in $S$. Below we present the key claim for our proof, which states that most vertices in $S$ are incident to $\leq 1$ heavy vertices, and therefore to $\geq k-1$ non-heavy vertices. We prove this claim with an application of the expander mixing lemma.

  We first need the following notation. For $0\leq i\leq k$, let $S_{=i}=\{w\in L(X):|N_X(w)\cap H|=i\}\subseteq L(X)$ be the set of vertices in $L(X)$ incident to exactly $i$ heavy vertices. Similary define $S_{\geq i}=\bigcup_{j\geq i}S_{=j}$ and $S_{\leq i}=\bigcup_{j\leq i}S_{=j}$.

  \begin{claim}
    \label{claim:Sleq1}
    It holds that
    \begin{align*}
      \frac{|S_{\leq 1}|}{|S|}
      &\geq 1-\frac{1}{5k}.
    \end{align*}
  \end{claim}
  \begin{proof}[Proof of Claim~\ref{claim:Sleq1}]
    By definition
    \begin{align*}
      |H|
      &\leq \frac{k|S|}{\mu_0D_0} = \frac{|S|}{10k^2\lambda_2D_0} \leq \frac{\mu|L(G)|}{10k^2\lambda_2D_0} = \frac{\lambda_2|R(X)|}{10},
    \end{align*}
    where the equalities above apply the definitions of $\lambda_2$ and $\mu$ respectively. Thus letting $X'$ be the nonlazy square of $X$, Lemma~\ref{lem:expmix} implies that
    \begin{equation*}
      w_{X'}(H,H) \leq \frac{11}{10}\lambda_2D_0(k-1)|H|.
    \end{equation*}

    For each vertex $w\in S_{\geq 2}$, we may choose two distinct heavy vertices $v,v'\in N_X(w)\cap H$, and let $e(w)=\{v,v'\}\in E(X')$ be the edge in $X'$ induced by the path $v\rightarrow w\rightarrow v'$ in $X$. By definition all edges $e(w)$ for $w\in S_{\geq 2}$ are distinct,\footnote{Here we view $E(X')$ as a multiset where the edge $(v,v')$ has a distinct copy for every path $v\rightarrow w\rightarrow v'$ in $X$.} and both endpoints of each $e(w)$ lie in $H$, so
    \begin{equation*}
      |S_{\geq 2}| = |\{e(w):w\in S_{\geq 2}\}| \leq w_{X'}(H,H) \leq \frac{11}{10}\lambda_2D_0(k-1)|H|.
    \end{equation*}
    Meanwhile, as each vertex $w\in S_{\geq 1}$ is incident to at least one but at most $k$ heavy vertices, we have that
    \begin{equation*}
      |S_{\geq 1}| \geq \frac{\mu_0D_0|H|}{k}.
    \end{equation*}
    Thus
    \begin{align*}
      |S_{\leq 1}|
      &\geq |S_{=1}| \\
      &= |S_{\geq 1}|-|S_{\geq 2}| \\
      &\geq \left(\frac{\mu_0}{k}-\frac{11}{10}\lambda_2(k-1)\right)D_0|H| \\
      &\geq 8k^2\lambda_2D_0|H| \\
      &\geq 5k|S_{\geq 2}|.
    \end{align*}
    where the third inequality above applies the definition of $\lambda_2$. Thus
    \begin{align*}
      \frac{|S_{\leq 1}|}{|S|}
      &= \frac{|S_{\leq 1}|}{|S_{\leq 1}|+|S_{\geq 2}|} \geq 1-\frac{|S_{\geq 2}|}{|S_{\leq 1}|} \geq 1-\frac{1}{5k}.
    \end{align*}
  \end{proof}

  We now prove the proposition using this fact that most vertices $w\in S$ are incident to at most one heavy vertex. By definition $G_0$ is a $(\mu_0,\epsilon_0)$-lossless expander, so the intersection of $S$ with the $X$-neighborhood $N_X(v)\subseteq L(X)=L(G)$ of every non-heavy vertex $v\in R(X)\setminus H$ exhibits expansion $(1-\epsilon_0)d_0$ in the subgraph $G_0^v\cong G_0$ of $G$. Then it follows that most vertices in $L(G)$ contribute $(k-1)(1-\epsilon_0)d_0$ to the expansion of $S$ in $G$, which implies the proposition.

  Formally, for $v\in R(X)\setminus H$ then by definition $|N_X(v)\cap S|\leq\mu_0D_0$, so
  \begin{equation*}
    |N_{G_0^v}(N_X(v)\cap S)| \geq (1-\epsilon_0)d_0|N_X(v)\cap S|.
  \end{equation*}
  Thus
  \begin{align*}
    |N_G(S)|
    &= \sum_{v\in R(X)}|N_{G_0^v}(N_X(v)\cap S)| \\
    &\geq \sum_{v\in R(X)\setminus H}(1-\epsilon_0)d_0|N_X(v)\cap S| \\
    &= \sum_{w\in S}\sum_{v\in N_X(w)\setminus H}(1-\epsilon_0)d_0 \\
    &\geq \sum_{w\in S_{\leq 1}}(k-1)(1-\epsilon_0)d_0 \\
    &\geq |S|\left(1-\frac{1}{5k}\right)(k-1)(1-\epsilon_0)d_0 \\
    &\geq |S|\left(1-\frac{\epsilon}{50}\right)\left(1-\frac{\epsilon}{10}\right)k\left(1-\frac{\epsilon}{10}\right)d_0 \\
    &\geq |S|(1-\epsilon)D,
  \end{align*}
  where the third inequality holds by Claim~\ref{claim:Sleq1}, and the final inequality holds because $D=kd_0$ by Claim~\ref{claim:leftdeg}. Thus we have shown that $G$ is a $(\mu,\epsilon)$-lossless expander, as desired.
\end{proof}

\begin{remark}
  One could hope that our construction $G=G(X,G_0)$ happens to expand losslessly from right to left as well. However, if we fix any $v\in R(X)$, then the set $T=\{v\}\times R(G_0)\subseteq R(G)$ consisting of a single cluster of right vertices in $G$ has neighborhood of size $|N_G(T)|=|N_X(v)|=D_0$, whereas $(\Omega(1),\epsilon)$-lossless right-to-left expansion would require the much larger neighborhood size $|N_G(T)|\geq(1-\epsilon)D_0d_0$. Thus a new approach is needed for twosided expansion.
\end{remark}

\begin{remark}
  \label{remark:compareparam}
  Given $\epsilon>0$ and $\beta^{(2)}<1/2$, by tracing through the parameters in Section~\ref{sec:param}, we find that our $(\mu,\epsilon)$-lossless expanders in Proposition~\ref{prop:expformal} have degree
  \begin{align*}
    D &= O\left(\frac{\log\frac{1}{\epsilon}+\log\frac{1}{\beta^{(2)}}}{\epsilon^2}\right)
  \end{align*}
  and exhibit expansion up to the cutoff
  \begin{align*}
    \mu &= \Omega\left(\frac{\epsilon^{10}{\beta^{(2)}}^2}{\left(\log\frac{1}{\epsilon}+\log\frac{1}{\beta^{(2)}}\right)^2}\right).
  \end{align*}
  In comparison, the construction of \cite{capalbo_randomness_2002} has degree
  \begin{align*}
    D' &= O\left(\frac{\left(\log\frac{1}{\epsilon}+\log\frac{1}{\beta^{(2)}}\right)^2}{\epsilon^3}\right)
  \end{align*}
  and exhibits expansion up to the cutoff
  \begin{align*}
    \mu' &= \Theta\left(\frac{\epsilon\beta^{(2)}}{D'}\right) = \Omega\left(\frac{\epsilon^4\beta^{(2)}}{\left(\log\frac{1}{\epsilon}+\log\frac{1}{\beta^{(2)}}\right)^2}\right).
  \end{align*}
  Thus the two constructions achieve incomparable parameters; we achieve a smaller degree $D<D'$, whereas \cite{capalbo_randomness_2002} achieves a larger expansion cutoff $\mu'>\mu$.
\end{remark}

\section{Acknowledgments}
The author thanks Omar Alrabiah, Venkatesan Guruswami, Sidhanth Mohanty, Christopher A.~Pattison, and Salil Vadhan for numerous helpful discussions and suggestions, and for helping improve the exposition. S.~Mohanty suggested the \ref{it:bipramproof}nd proof of Proposition~\ref{prop:nonlazyexp} using bipartite Ramanujan graphs. The author thanks Peter Manohar and Justin Oh for pointing out applications of lossless expanders with improved parameters.

\bibliography{library}
\bibliographystyle{alpha}

\end{document}